\numberwithin{equation}{section}
\theoremstyle{plain}
\newtheorem{theorem}{Theorem}
\newtheorem*{theorem*}{Theorem}
\newtheorem*{lemma*}{Lemma}
\newtheorem{lemma}{Lemma}[section]
\newtheorem*{corollary*}{Corollary}
\theoremstyle{definition}
\theoremstyle{remark}
\newcommand{\R}{\mathbb{R}}
\newcommand{\US}{\mathbb{S}}
\newcommand\supp{\mathop{\rm supp}}
\newcommand\real{\mathop{\rm Re}}
\newcommand\imag{\mathop{\rm Im}}
\newcommand*{\defeq}{\mathrel{\vcenter{\baselineskip0.5ex \lineskiplimit0pt

                     \hbox{\scriptsize.}\hbox{\scriptsize.}}}%
                     =}
\definecolor{jeffColor}{RGB}{102, 0, 204}
\title{Semiclassical resolvent bounds for long range Lipschitz potentials}
\begin{document}

\author{Jeffrey Galkowski}
\address{Department of Mathematics, University College London, London, UK}
\email{j.galkowski@ucl.ac.uk}

\author{Jacob Shapiro}

\address{Department of Mathematics, University of Dayton, Dayton, OH 45469-2316}
\email{jshapiro1@udayton.edu}

\begin{abstract}
We give an elementary proof of weighted resolvent estimates for the semiclassical Schr\"odinger operator $-h^2 \Delta + V(x) - E$ in dimension $n \neq 2$, where $h, \, E > 0$. The potential is real-valued, $V$ and $\partial_r V$ exhibit long range decay at infinity, and may grow like a sufficiently small negative power of $r$ as $r \to 0$. The resolvent norm grows exponentially in $h^{-1}$, but near infinity it grows linearly. When $V$ is compactly supported, we obtain linear growth if the resolvent is multiplied by weights supported outside a ball of radius $CE^{-1/2}$ for some $C > 0$. This $E$-dependence is sharp and answers a question of Datchev and Jin. 
\end{abstract}
\maketitle 
\author
\section{Introduction and statement of results}

Let $\Delta \defeq \sum_{j=1}^n \partial^2_j \le 0$ be the Laplacian on $\mathbb{R}^n$, $n \neq 2$. Let $P$ denote the semiclassical Schr\"odinger operator
\begin{equation}
P = P(h) \defeq -h^2 \Delta + V(x) : L^2(\mathbb{R}^n) \to L^2(\mathbb{R}^n),\qquad x \in \R^n, \, h>0. \label{P}
\end{equation}
We use $(r, \theta) = (|x|, x/|x|) \in (0, \infty) \times \US^{n-1}$ for polar coordinates on $\R^n \setminus \{0\}$. Let $c_0, \, c_1 > 0$ and $0 \le \delta < \sqrt{8} -2$. Furthermore, let $p(r) > 0$ be bounded and decreasing to zero as $r \to \infty$, and suppose $0 < m(r) \le 1$ with 
\begin{equation} \label{defn m}
\lim_{r\to \infty}m(r)=0,\qquad (r+1)^{-1}m(r)\in L^1(0,\infty).
\end{equation}
We assume the potential $V : \R^n \to \R$, satisfies
\begin{gather}
V \in L^n(\R^n ; \R) + L^\infty(\R^n; \R), \label{L one plus L infty} \\
V \mathbf{1}_{|x| < 1}  \le c_1 r^{-\delta}, \label{V bound r small} \\
V \mathbf{1}_{|x| \ge 1} \le p(r)  \label{V bound r large}.
 \end{gather}
 In addition, we suppose there is function $\partial_r V \in L^1_{\text{loc}}(\R^n \setminus \{0\})$ such that, for each $\theta \in \US^{n-1}$, the function $(0, \infty) \ni r  \mapsto V(r, \theta) \defeq V(r \theta)$ has distributional derivative equal to $r \mapsto \partial_r V( r ,\theta)$, and
 \begin{gather}
\partial_r V( r, \theta) \mathbf{1}_{0< r  < 1}   \le c_1 r^{-1-\delta} \label{V prime bound r small}, \\
\partial_rV( r, \theta) \mathbf{1}_{r \ge 1} \le c_0r^{-1}m(r). \label{V prime bound r large}
\end{gather}

The prototypes we have in mind for \eqref{defn m} are the long range cases
\begin{equation} \label{long range m}
m = \log^{-1-\rho}(r + e), \qquad 
m = (r + 1)^{-\rho}, \qquad\qquad \rho>0.
\end{equation}

 When $n \ge 3$, \eqref{L one plus L infty} implies \eqref{P} is self-adjoint with respect to the domain $\mathcal{D}(P) = H^2(\R^n)$ \cite[Theorem 8]{ne64} . If $n =1$, then \eqref{P} is self-adjoint with respect to
\begin{equation*}
\mathcal{D}(P)=\{ u\in H^1(\mathbb{R})\mid u' \in L^\infty(\mathbb{R}),\,  Pu\in L^2(\mathbb{R})\},
\end{equation*}
see the proof in the Appendix. By \cite[Theorem 8]{ne64},  \eqref{P} is self adjoint with respect to $H^2(\R^n)$, provided $V \in L^p + L^\infty$ and $p \ge 2$, $p > n/2$, but for simplicity we work with \eqref{L one plus L infty}.

 For $E > 0$ and $s > 1/2$ fixed, and $h, \varepsilon > 0$, our goal is to establish $h$-dependent upper bounds on the weighted resolvent norms
 \begin{gather}
 g_s^{\pm}(h, \varepsilon) \defeq \| \langle x \rangle^{-s}  (P(h) - E \pm i \varepsilon)^{-1} \langle x \rangle^{-s}  \|_{L^2(\R^n) \to L^2(\R^n)}, \label{defn g} \\
g_s^{\pm}(h, M, \varepsilon) \defeq \| \langle x \rangle^{-s} \mathbf{1}_{|x| \ge M} (P(h) - E \pm i \varepsilon)^{-1} \mathbf{1}_{|x| \ge M } \langle x \rangle^{-s}  \|_{L^2(\R^n) \to L^2(\R^n)}. \label{defn g a}
 \end{gather}
Here, $\langle x\rangle=\langle r\rangle \defeq (1+r^2)^{1/2}$.
 
 In our Theorem, we bound ~\eqref{defn g} and~\eqref{defn g a} and show that, if $V$ is compactly supported, there are constants $C_1, \, h_0 > 0$, such that \eqref{defn g a} grows linearly in $h^{-1}$, provided $M \ge C_1 E^{-1/2}$, $\varepsilon > 0$ and $h \in (0, h_0]$. 
 
 \begin{theorem} \label{thm high dim} Fix $E > 0$ and $s > 1/2$. Suppose $V : \R^n \to \R$ satisfies \eqref{L one plus L infty} through \eqref{V prime bound r large}. There exist $M=M(E,p, c_0, c_1,\delta, m)$, $C_2= C_2(E,s, p , c_0,c_1,\delta, m)$, $C_3=C_3(E,s, p , c_0,c_1,\delta, m) > 0$ and $h_0 \in (0,1]$ so that, for all $\varepsilon >0$ and $h \in (0, h_0]$,
\begin{equation}
\label{e:totalEst high dim} \| \langle x \rangle^{-s}  (P(h) - E \pm i \varepsilon)^{-1} \langle x \rangle^{-s}  \|_{L^2(\R^n) \to L^2(\R^n)} \le  e^{C_3/h},
\end{equation}
 and
\begin{equation} \label{ext est high dim}
 \| \langle x \rangle^{-s} \mathbf{1}_{|x| \ge M} (P(h) - E \pm i \varepsilon)^{-1} \mathbf{1}_{|x| \ge M } \langle x \rangle^{-s}  \|_{L^2(\R^n) \to L^2(\R^n)} \le C_2 /h.
\end{equation}
Moreover, if $\supp V\subseteq B(0,R_0)$, then one can take $M = C_1(p,c_0,c_1,\delta,R_0)E^{-1/2}$.
 \end{theorem}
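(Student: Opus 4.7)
My plan is to use the positive commutator / radial multiplier method in polar coordinates, which has become standard for low-regularity semiclassical resolvent estimates (Cardoso--Vodev, Datchev--Vodev, and Shapiro's earlier work). By the usual limiting absorption argument, it suffices to prove uniform bounds $\|\langle x\rangle^{-s}u\|_{L^2}\le C(h)\|\langle x\rangle^{s}f\|_{L^2}$ for $u\in\mathcal D(P)$ with $(P-E\mp i\varepsilon)u=f$. Conjugating by $r^{(n-1)/2}$, the function $w(r,\theta)\defeq r^{(n-1)/2}u(r\theta)$ satisfies an ODE on $(0,\infty)$ coupled to the spherical Laplacian, with a Bessel-type potential $h^2(n-1)(n-3)/(4r^2)$. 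The assumption $n\neq 2$ guarantees that this term, combined with the angular energy, has a useful sign when paired with Hardy's inequality.

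I would then test the equation against a multiplier of the form $\beta(r)\partial_r\bar w+\tfrac12\beta'(r)\bar w$ for a radial weight $\beta$ to be chosen, and integrate by parts over $(0,\infty)\times\US^{n-1}$. The resulting identity is schematically
\[
\int\bigl[\beta'|h\partial_r w|^2+\beta' r^{-2}|h\nabla_\theta w|^2+\beta'(E-V)|w|^2-\beta\,\partial_r V|w|^2-\tfrac14 h^2\beta'''|w|^2\bigr]\,dr\,d\theta=\mathrm{RHS}.
\]
The weight $\beta$ would be constructed in three regimes. Near $r=0$, the sharp threshold $\delta<\sqrt 8-2$ is exactly what allows Hardy's inequality, with the constant coming from the centrifugal term, to absorb $c_1 r^{-\delta}$ into the kinetic energy. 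In the bulk, where no smallness of $V$ is available, I would take $\beta\sim e^{C/h}$ so that $\beta\,\partial_r V$ is dominated by $E\beta'$; this is what generates the $e^{C_3/h}$ bound of \eqref{e:totalEst high dim}. Past a large radius $M$, where \eqref{V bound r large} and \eqref{V prime bound r large} make $V$ and $r\partial_r V$ small compared to $E$, I would take $\beta'\sim 1+C\int_r^\infty m(s)\,s^{-1}\,ds$, which is bounded by \eqref{defn m} and produces the linear $C_2/h$ bound of \eqref{ext est high dim}.

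For the sharp $M=C_1E^{-1/2}$ when $\supp V\subseteq B(0,R_0)$, the point is that on $|x|\ge R_0$ the equation reduces to the free one $(-h^2\Delta-E\mp i\varepsilon)u=f$. Rescaling $y=\sqrt E\,x$ converts this to a free semiclassical resolvent problem with parameter $\tilde h=h/\sqrt E$ on $\R^n\setminus B(0,\sqrt E R_0)$, whose weighted $\tilde h^{-1}$ bound with cutoff $|y|\ge C$ is classical. Translating back to $x$ variables, the weights must sit at $|x|\ge CE^{-1/2}$, and the annular region $R_0\le|x|\le CE^{-1/2}$ is handled by the a priori interior bound already obtained, combined with an elliptic estimate for $-h^2\Delta-E$ near the cutoff.

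The main obstacle I expect is the interplay between $\beta$ and the borderline-integrable tail $\partial_r V\le c_0 m(r)/r$. Because $m$ can decay as slowly as $\log^{-1-\rho}$, the error $\int\beta\,\partial_r V|w|^2$ is right at the edge of being absorbable; $\beta$ must be tuned so that $\beta'/\beta$ dominates a multiple of $m/(Er)$ pointwise while $\beta$ itself stays bounded, a balance permitted precisely by the integrability hypothesis in \eqref{defn m}. A secondary difficulty is that $\partial_r V$ is only a distribution with an $r^{-1-\delta}$ singularity allowed at the origin by \eqref{V prime bound r small}, so every integration by parts and every boundary term at $r=0$ must be justified by approximation and controlled via the Hardy bound used near the origin.
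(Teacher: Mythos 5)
Your scheme for \eqref{e:totalEst high dim} and for \eqref{ext est high dim} with an unspecified large $M$ is essentially the paper's: conjugating by $e^{\varphi/h}r^{(n-1)/2}$ and differentiating a spherical energy $wF$ is the same computation as testing against $\beta(\partial_r+\cdots)\bar w$ with $\beta = w e^{2\varphi/h}$, and your exterior requirement that $\beta'/\beta$ dominate a multiple of $m/(Er)$ while $\beta$ stays bounded is exactly the paper's use of $(r+1)^{-1}m\in L^1$ via $\tilde m$ in \eqref{tilde m}--\eqref{W}. (One caveat: your bulk step never actually introduces the $e^{\varphi/h}$ conjugation; ``$\beta\sim e^{C/h}$'' as a size of the multiplier does nothing, since only $\beta'/\beta$ and ratios of $\beta$ enter the identity. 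What beats $V+rV'$ on the compact set $[1/2,b]$ is the positive term $(\varphi')^2(1+2\mathcal W\Phi)\sim\tau_0^2(r+1)^{-3}$ produced by a phase with large slope $\tau_0$, and the $e^{C_3/h}$ arises only when the conjugation is undone. Similarly, the threshold $\delta<\sqrt8-2$ does not come from Hardy's inequality against the centrifugal term --- the paper just discards $\Lambda\ge0$ for $n\ne2$ --- but from choosing $\varphi'\sim\tau_0r^{-\delta/2}$, $\mathcal W=r$, $\Phi=-\delta/(2r)$ and requiring $1-\delta-\tfrac14(1+\eta)\delta^2>0$.)

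The genuine gap is your argument for the sharp $M=C_1E^{-1/2}$, which is the main novelty of the theorem. The operator $\mathbf 1_{|x|\ge M}(P-E\pm i\varepsilon)^{-1}\mathbf 1_{|x|\ge M}$ is not the resolvent of a free exterior problem: if $(P-E\mp i\varepsilon)u=\mathbf 1_{\ge M}\langle x\rangle^{-s}f$, then on $|x|\ge R_0$ the function $u$ solves the free equation with \emph{unknown} Cauchy data on $|x|=R_0$, and that data carries the interior (possibly trapped) behavior, which your a priori interior bound controls only up to $e^{C/h}$. Gluing a nontrapping exterior estimate to an exponential interior one therefore yields $e^{C/h}$ for the cutoff resolvent, not $C_2/h$; indeed, if the exterior estimate followed from the free equation outside $\supp V$ alone, it would hold for every $M>R_0$ with $V\in L^\infty_{\mathrm{comp}}$, which the paper notes is open for $n\ge2$. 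The actual mechanism is internal to the single global Carleman identity: one must turn the phase off ($\varphi'\equiv0$ for $r\ge M$) so that the exterior weights carry no exponential factor, and the constraint during the turn-off on $(a,M)$ is that the negative contribution $(\varphi')^2\cdot2\mathcal W|\Phi|\sim\tau_0^2\,r(M-r)^3/(a+1)^2(M-a)^4$ stay below $E/4$. Since $\varphi'(a)\sim\tau_0/a$, this forces $M-a\gtrsim\tau_0E^{-1/2}$, which is the sole source of the $E^{-1/2}$ scale. Your proposal contains no substitute for this step, so the claim $M=C_1E^{-1/2}$ is not established.
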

 
The main novelty of the Theorem is in the compactly supported case, where $M$ need not be larger than a constant times $E^{-1/2}$ . This seems to be the first general bound of the form $g_s^\pm(h, M , \varepsilon) \le Ch^{-1}$ for which $M$ depends explicitly on $E$. Moreover, owing to a construction of Datchev and Jin \cite[Theorem 1]{daji20}, this $E$-dependence of $M$ is optimal. In particular, if $V\in C_0^\infty(\mathbb{R}^n;\mathbb{R})$, $n\geq 2$ is radial and $\min(V)<0$, then there is $M\leq cE^{-1/2}$ with $g_s^{\pm}(h,M,\varepsilon)\geq e^{C/h}$. In addition, to the author's knowledge, this article is the first in this line of work to allow $V$ to be unbounded. We have included this to illustrate the flexibility of our methods, but do not expect the growth of $r^{-\sqrt{8} +2}$ near $r = 0$ is optimal.

Burq \cite{bu98} was the first to show $g^{\pm}_s \le e^{Ch^{-1}}$ for compactly supported perturbations of the Laplacian on $\R^n$. This bound was refined and extended many times \cite{vo00, bu02, sj02, cavo02, da14, sh19, vo20b} and is sharp in general, see \cite{ddz15}. Cardoso and Vodev \cite{cavo02}, refining Burq's earlier work \cite{bu02}, were the first to prove an exterior estimate of the form \eqref{ext est high dim}. They did so for smooth $V$ on a large class of infinite volume Riemannian manifolds. Exterior estimates were subsequently established under a wide range of regularity and geometric conditions \cite{ da14, vo14, rota15, dadeh16, sh19}.

Stronger bounds on $g_s^\pm$ are known when $V$ is smooth and conditions are imposed on the classical flow $\Phi(t) = \exp t(2 \xi \partial_x - \partial_x V(x) \partial_\xi)$ (note that $\Phi(t)$ may be undefined in our case). The key dynamical object is the \textit{trapped set} $\mathcal{K}(E)$ at energy $E > 0$, defined as the set of $(x, \xi) \in T^*\R^n$ such that $|\xi|^2 + V(x) = E$ and $|\Phi(t)(x, \xi)|$ is bounded as $|t| \to \infty$. If $\mathcal{K}(E) = \emptyset,$ that is, if $E$ is \textit{nontrapping}, Robert and Tamura \cite{rota87} showed $g^\pm_s \le Ch^{-1}$. We may think of \eqref{ext est high dim} as a low regularity analog; it says that applying cutoffs supported far away from zero removes the losses from \eqref{e:totalEst high dim} due to trapping.

Resolvent estimates such as \eqref{e:totalEst high dim} imply logarithmic local energy decay for the wave equation
\begin{equation} \label{wave equation}
\begin{cases}
(\partial_t^2 - c^2(x)\Delta) u(x,t) = 0, & (x,t) \in \left(\mathbb{R}^n \setminus \Omega \right) \times (0, \infty), \, n \ge 2, \\
 u(x,0) = u_0(x),\\
 \partial_t u(x,0) = u_1(x), \\
 u(t,x) = 0, & (x,t) \in  \partial \Omega \times (0,\infty),
\end{cases}
\end{equation}
where $\Omega$ is a compact (possibly empty) obstacle with smooth boundary, and the initial data are compactly supported. Such a decay rate was first proved by Burq \cite{bu98, bu02} for $c$ smooth. Logarithmic decay was subsequently established (when $\Omega = \emptyset$) for Lipschitz $c$ bounded from above and below \cite[Theorem 1]{sh18}. See also  \cite{be03, cavo04, bo11, mo16, ga19}. By leveraging \eqref{e:totalEst high dim}, we expect \cite[Theorem 1]{sh18}  extends to certain $c$ which tend to $0$ at a point.

 As shown in section XIII.7 of \cite{resi78}, the exterior bound \eqref{ext est high dim} is related to exterior smoothing and Strichartz estimates for Schr\"odinger propagators, see also \cite{botz07, mmt08} and Section 7.1 of \cite{dyzw19}. Furthermore, Christiansen \cite{ch17} used an estimate like \eqref{ext est high dim} to find a lower bound on the resonance counting function for compactly supported perturbations on the Laplacian on even-dimensional Riemannian manifolds. 

To prove the Theorem, we adapt the Carleman estimate from \cite{gash20}, which was used to prove a resolvent estimate for $L^\infty$ potentials. The key ingredients remain a weight $w(r)$ and phase $\varphi(r)$ that obey a crucial lower bound, see \eqref{crucial lower bd} below. The main technical innovation is that, by leveraging the additional regularity of $V$, we can decrease $\varphi'$ to zero (outside of a compact set) in an explicit,  $E$-dependent fashion. We then obtain \eqref{ext est high dim} for any $M$ such that $\mathbf{1}_{|x| \ge M}$ is supported in the set where $\varphi$ is constant.

If we do not assume anything about the derivatives of $V$, for instance, if $V \in L_{\text{comp}}^\infty(\R^n ; \R)$, then the best known bound in general is $g_s^\pm \le \exp( Ch^{-4/3}  \log(h^{-1}))$ \cite{klvo19,sh20}, although Vodev \cite{vo20c} showed this can be improved to $g_s^\pm \le \exp(Ch^{-4/3})$ if $V$ is short range and radial. See also \cite{vo19a, vo20a, vo20b, gash20}. On the other hand, it is not known whether an exterior estimate like \eqref{ext est high dim} holds for  $L^\infty$ potentials, except in dimension one \cite{dash20}, and there $\mathbf{1}_{|x| \ge M}$ and $V$ need only have disjoint supports. 

We remark that the Theorem should hold in dimension two also, provided  $V \in L^\infty$ and $|\nabla V|$ is locally bounded near the origin. The extra difficulty in dimension two comes from the effective potential term, see \eqref{Lambda positive} below, having a negative singularity at $r = 0$. This necessitates a stronger assumption on the derivatives of $V$, see \cite[Theorem 4.2]{vo20b} for more details. 

For more background on semiclassical resolvent estimates, we refer the reader to the introductions of \cite{daji20, gash20}.

\bigskip

\noindent {\sc Acknowledgements.}
The authors thank  Kiril Datchev for helpful comments and for reading an early version of this article, and the two anonymous referees for their careful reading and insightful remarks. J. Shapiro was supported in part by the Australian Research Council through grant DP180100589.\\

\section{Notation and preliminary calculations}

\textbf{Notation:} Throughout, ``prime" notation indicates differentiation with respect to the radial variable $r  =|x|$, e.g., $u' = \partial_r u$. 

As in most previous proofs of resolvent estimates for low regularity potentials, the backbone of the proof is a Carleman estimate. Our Carleman estimate is stated as Lemma \ref{Carleman lemma}.

 We start from the identities 
\begin{equation}
\begin{gathered}
    r^{\frac{n-1}{2}}(- \Delta) r^{-\frac{n-1}{2}} = -\partial^2_r + \Lambda,
    \label{Lambda positive}\\
    \Lambda \defeq \frac{1}{r^2}\left( -\Delta_{\US^{n-1}} + \frac{(n-1)(n-3)}{4} \right) \ge 0,
    \end{gathered}
\end{equation}
where $\Delta_{\US^{n-1}}$ denotes the negative Laplace-Beltrami operator on $\US^{n-1}$. Below, we construct an absolutely continuous phase function $\varphi$ on $[0, \infty)$ which obeys $\varphi \ge 0$, $\varphi(0) = 0,$ and $\varphi' \ge 0$. Using $\varphi$, we form the conjugated operator
\begin{equation} \label{conjugation}
\begin{split}
  P^{\pm}_\varphi(h) &\defeq e^{\varphi/h} r^{\frac{n-1}{2}}\left( P(h) - E \pm i\varepsilon \right) r^{-\frac{n-1}{2}} e^{-\varphi/h}\\
  &= -h^2\partial^2_r + 2h \varphi' \partial_r + h^2\Lambda + V -(\varphi')^2 + h\varphi''  - E \pm i\varepsilon.
 \end{split}
\end{equation}

Let $u \in e^{\varphi/h} r^{(n-1)/2} C^\infty_0(\R^n)$ when $n \ge 3$, $u \in e^{\varphi/h} r^{(n-1)/2} \mathcal{D}(P)$ when $n = 1$. Define a spherical energy functional $F[u](r)$,
\begin{equation} \label{F}
    F(r) = F[u](r) \defeq \|hu'(r, \cdot)\|^2 - \langle (h^2\Lambda + V  -(\varphi')^2- E)u(r, \cdot), u(r, \cdot) \rangle,
\end{equation}
where $\| \cdot \|$ and $\langle \cdot, \cdot \rangle$ denote the norm and inner product on $L^2(\mathbb{S}_\theta^{n-1})$, respectively (when $n = 1$, $\|u(r, \theta)\|_{\US^{0}} \defeq |u(r)| + |u(-r)|)$. It is easy to compute (see e.g.~\cite{da14,sh19, sh20, gash20}) that for $w\in C^0[0,\infty)$ and piecewise $C^1$, 
$(wF)'$, as a distribution on $(0,\infty)$, is given by 
\begin{equation} \label{deriv wF}
\begin{split}
    (wF)'
    &= -2 w\real \langle P^{\pm}_\varphi(h) u, u' \rangle \mp 2\varepsilon w \imag \langle u,u'\rangle + (2wr^{-1} - w') \langle h^2\Lambda u,u\rangle \\
    &+ (4h^{-1}w \varphi' + w')\|hu'\|^2  + (w(E+ (\varphi')^2- V ))' \|u\|^2 + 2w\real \langle  h \varphi'' u, u' \rangle. 
    \end{split}
\end{equation}
We will construct $w \ge 0$, $w(0) = 0$ such that
\begin{equation}
\label{e:weightCond}
2wr^{-1}-w'\geq 0,
\end{equation} 
and use ~\eqref{Lambda positive}
to control the term involving $\Lambda$. Using~\eqref{e:weightCond} together with $2ab \ge -(\gamma a^2 + \gamma^{-1}b^2)$ for $\gamma > 0$, we find
\begin{equation*}
\begin{split}
    (wF)'
    &= -2 w\real \langle P^{\pm}_\varphi(h) u, u' \rangle \mp 2\varepsilon w \imag \langle u,u'\rangle + (2wr^{-1} - w') \langle h^2\Lambda u,u\rangle \\
    &+ (4h^{-1}w \varphi' + w')\|hu'\|^2  + (w(E+ (\varphi')^2- V ))' \|u\|^2 + 2w\real \langle  h \varphi'' u, u' \rangle \\
    &\ge -\frac{\gamma_1 w^2}{h^2w'} \| P^{\pm}_\varphi(h) u \|^2 \mp 2 \varepsilon w \imag \langle u, u' \rangle \\
     &+ (4(1 - \gamma^{-1}_2) h^{-1}w \varphi' + (1 - \gamma^{-1}_1 - \gamma^{-1}_2) w')  \|hu'\|^2 \\
    & + (w(E+ (\varphi')^2- V ))' \|u\|^2 - \frac{\gamma_2(w \varphi'')^2}{w' + 4h^{-1}\varphi' w} \|u\|^2, \qquad \gamma_1, \, \gamma_2 > 0. 
    \end{split}
\end{equation*}
Fix $\eta > 0$ and put $\gamma_1 = (1 + \eta)/\eta, \, \gamma_2 = 1 + \eta$, yielding
\begin{equation}
\label{e:lowerDerivative}
\begin{split}
    (wF)'
      &\ge -\frac{(1 + \eta)w^2}{\eta h^2w'} \| P^{\pm}_\varphi(h) u \|^2 \mp 2 \varepsilon w \imag \langle u, u' \rangle \\
    & + (w(E+ (\varphi')^2- V ))' \|u\|^2 - \frac{(1 + \eta)(w \varphi'')^2}{w' + 4h^{-1}\varphi' w} \|u\|^2. 
    \end{split}
\end{equation}
To complete the proof of the Carleman estimate, we seek to build $w$ and $\varphi$ so that the second line of \eqref{e:lowerDerivative} has a good lower bound. Indeed, putting 
\begin{equation} \label{A and B}
A(r) \defeq (w(E+ (\varphi')^2-V))',\qquad B(r)\defeq \frac{(w\varphi'')^2}{w' + 4h^{-1}\varphi' w},
\end{equation}
it suffices for $w$ and $\varphi$ to satisfy, for $\eta > 0$ fixed, 
\begin{equation} \label{goal est}
A(r)-(1+\eta)B(r) \geq \frac{E}{2}w', \qquad 0 < h \ll 1,
\end{equation}
along with a few other properties (see \eqref{W} through \eqref{phi prime}).

In order to construct the weight and phase functions for our Carleman estimates, we adapt the method in~\cite{gash20}. Whenever $w',\varphi' \neq 0$, put 
\begin{equation}\label{e:defPhiW}
\Phi:=\frac{\varphi''}{\varphi'}=(\log |\varphi'|)',\qquad \mathcal{W}:=\frac{w}{w'}=\frac{1}{(\log |w|)'},
\end{equation}
Then, as in~\cite[(2.10)]{gash20},
\begin{equation}
\label{key prelim calc}
\begin{aligned}
A(r)- (1+\eta)B(r)&\geq w'\Big[ E+(\varphi')^2(1+2\mathcal{W}\Phi-(1+\eta)\mathcal{W}\Phi^2\min(\mathcal{W},\tfrac{h}{4\varphi'})) -V-\mathcal{W}V'  \Big].
\end{aligned}
\end{equation}
So when $|w'|, \, |\varphi'| > 0$, to show \eqref{goal est}, it is enough to bound the bracketed expression in \eqref{key prelim calc} from below by $E/2$.

\section{Construction of the phase and weight functions} \label{phase and weight section}

Throughout this section, we assume $E > 0$, $s > 1/2$ are fixed, and suppose $V$ satisfies \eqref{V bound r small} through \eqref{V prime bound r large}. Using \eqref{V bound r large} and \eqref{V prime bound r large}, let
\begin{equation} \label{defn b}
b \defeq \sup\{ |x| \ge 1 : V(x)+ |x| V'(x) \geq \tfrac{E}{4}\} < \infty,
\end{equation}
so that $b$ is independent of $h$ and
\begin{equation} \label{V split b}
(V + |x| V')\mathbf{1}_{|x| \ge 1} \le (V +|x| V')\mathbf{1}_{|x| \le b}  + \frac{E}{4} \mathbf{1}_{|x| > b}.
\end{equation}
(Note that $b$ can be chosen to depend only on $p$, $m$, $c_0$, and $E$, and that $b\leq R_0$  provided $\supp V \subseteq B(0,R_0)$.)
Additionally, let 
\begin{equation} \label{a M tau}
M > a \ge b, \qquad  \tau_0 \ge 1,
\end{equation}
be parameters, independent of $h$, to be specified in the proof of Lemma \ref{crucial lower bd lemma} below.

Let $\omega\in C_{0}^\infty((-3/4,3/4);[0,1])$ with $\omega = 1 $ near $[-1/2,1/2]$. The weight $w$ and phase $\varphi$, which will  be shown to satisfy \eqref{goal est}, are functions of the radial variable $r = |x|$ only, and are defined by
\begin{gather} 
\tilde{m}(r) \defeq \min \Big[ \frac{E}{2c_0} m^{-1}(r), (r + 1)^{2s-1} \Big], \label{tilde m} \\
w(0) = 0, \, w'(0) = 1, \qquad \frac{w}{w'} = \mathcal{W} \defeq \begin{cases} \frac{r(1+\omega(r))}{2} & 0 < r < M \\
\frac{r}{2} \tilde{m}(r) & r > M
\end{cases} \label{W}, \\
 \varphi(0) = 0, \, \varphi'(\tfrac{1}{2}) = \tfrac{2}{3} \tau_0, \qquad \frac{\varphi''}{\varphi'} = \Phi \defeq  \begin{cases} -\frac{\delta}{2r}&0<r<\frac{1}{2}\\ -\frac{1}{r + 1} & \tfrac{1}{2} < r < a\\
-\frac{2}{M -r} & a < r < M
 \end{cases},\qquad
 \varphi' = 0, \qquad r \ge M. \label{Phi}
\end{gather}
 Short computations yield,
\begin{gather} 
w = \begin{cases} r & 0 < r < 1/2 \\
 \frac{1}{2}e^{\int_{1/2}^r \frac{2}{s(1+\omega(s))}ds} & 1/2 \le r < M \\
w(M) e^{\int_M^r \frac{2}{s \tilde{m}(s)} ds} & r \ge M \end{cases} ,\qquad
  w' = \begin{cases} 1 & 0 < r < 1/2 \\
  \frac{1}{r(1+\omega(r))}w&1/2 <r<M \\
   \frac{2w(M) e^{\int_M^r \frac{2}{s\tilde{m}(s)} ds}}{r \tilde{m}(r)} \ge (r + 1)^{-2s} & r > M
   \end{cases}, \label{w}  \\
\varphi'  = \begin{cases}  3^{-1}\cdot 2^{-\frac{\delta}{2} +1}\tau_0r^{-\frac{\delta}{2}}&0<r<\tfrac{1}{2}\\
 \frac{\tau_0}{r+1} & \tfrac{1}{2} \le r < a \\
\varphi'(a) \Big( \frac{M - r}{M - a} \Big)^2 & a \le r < M \\
0 & r \ge M
  \end{cases}.  \label{phi prime}
  \end{gather}


We now prove the crucial lower bound involving $E$, $w$ and $\varphi$ that is needed to prove the Carleman estimate.

\begin{lemma} \label{crucial lower bd lemma} 
Fix $0<\eta<\frac{4-4\delta-\delta^2}{\delta^2}$ and let $V$ satisfy \eqref{V bound r small}, \eqref{V bound r large}, \eqref{V prime bound r small} and \eqref{V prime bound r large}. Then, using the notation of \eqref{A and B} and \eqref{tilde m} through \eqref{phi prime},  there exist suitable $M, \, a$ and $\tau_0$ so that   
\begin{equation} \label{crucial lower bd}
A - (1+\eta)B \ge \frac{E}{2}w' , \qquad h \in (0, h_0], \, r > 0, \, r \neq \frac{1}{2}, \, a, \, M.
\end{equation}
\end{lemma}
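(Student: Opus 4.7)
My approach is to verify \eqref{crucial lower bd} directly by substituting the explicit formulas \eqref{W}--\eqref{phi prime} into \eqref{key prelim calc} and bounding the bracketed expression from below by $E/2$ on each of the four regions $(0,\tfrac{1}{2})$, $(\tfrac{1}{2},a)$, $(a,M)$, $(M,\infty)$; since $w'>0$ on each open interval, this reduces the whole lemma to pointwise inequalities. I will use the two elementary estimates $\min(\mathcal{W}, h/(4\varphi')) \le \mathcal{W}$ and $\le h/(4\varphi')$ depending on the region, the latter being essential in the middle region $(\tfrac{1}{2},a)$ for $h$ small.

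The outer region $r > M$ is immediate since $\varphi' \equiv 0$ there: the bracket reduces to $E - V - \mathcal{W}V'$, and combining \eqref{V bound r large} and \eqref{V prime bound r large} with the definition \eqref{tilde m} gives $V + \mathcal{W}V' \le p(r) + \tfrac{1}{2}c_0 m(r)\tilde{m}(r) \le E/2$ once $M$ is chosen large enough that $p(r) \le E/4$ there. For $0 < r < \tfrac{1}{2}$, the identities $\mathcal{W}\Phi = -\delta/2$ and $\mathcal{W}^2\Phi^2 = \delta^2/4$ make the bracket at least $E + (\varphi')^2\bigl(1 - \delta - (1+\eta)\delta^2/4\bigr) - V - \mathcal{W}V'$, with strictly positive coefficient of $(\varphi')^2$ by hypothesis on $\eta$. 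Since $(\varphi')^2 \sim \tau_0^2 r^{-\delta}$ and $V + \mathcal{W}V' \le 2c_1 r^{-\delta}$ by \eqref{V bound r small} and \eqref{V prime bound r small}, this closes once $\tau_0$ is taken large in terms of $c_1, \delta, \eta$.

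The remaining two regions interact and produce the $E$-dependence. In $(a, M)$, the design $\varphi'(r) = \varphi'(a)\bigl((M-r)/(M-a)\bigr)^2$ lets me expand $(\varphi')^2$ times each negative bracket term as a polynomial in $r$ bounded uniformly in absolute value by $C\tau_0^2 M^4/\bigl((a+1)^2(M-a)^4\bigr)$; for $M \ge 2a$ this is at most $C'\tau_0^2/(a+1)^2$, which I force to be $\le E/4$ by requiring $a+1 \gtrsim \tau_0/\sqrt{E}$. Combined with $V + \mathcal{W}V' \le E/4$, which holds as soon as $a \ge b$ and $p(a) \le E/8$ (the case $V' < 0$ being handled by $V \le p(r)$), this closes the region. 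In $(\tfrac{1}{2},a)$, I pick $h \le h_0(\tau_0)$ small so that $\min(\mathcal{W}, h/(4\varphi')) = h/(4\varphi')$, making the bracket equal to $1 + 2\mathcal{W}\Phi - (1+\eta)\mathcal{W}\Phi^2 h/(4\varphi') = 1/(r+1) - O(h/\tau_0)$, which is $\gtrsim 1/(r+1)>0$ for $h$ small. Then $(\varphi')^2$ times this is $\gtrsim \tau_0^2/(a+1)^3$, which must dominate $V + \mathcal{W}V' - E/2$, bounded by a constant $C_V$ depending on $c_0, c_1, p(1), m(1), \delta$; this forces $\tau_0^2 \gtrsim (a+1)^3(C_V - E/2)_+$. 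The narrow transition interval $(\tfrac{1}{2}, \tfrac{3}{4})$ where $\omega$ varies is handled by a short compactness argument, using that all quantities are bounded and the bracket is bounded below by a positive constant there for $h$ small.

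The main obstacle is to make the three constraints $\tau_0 \gtrsim 1$ (inner region), $a+1 \gtrsim \tau_0/\sqrt{E}$ (outer phase region), and $\tau_0^2 \gtrsim (a+1)^3(C_V - E/2)_+$ (middle region) mutually compatible. Choosing $\tau_0$ first in terms of $c_1, \delta, \eta, C_V$, then $a = \max(b, a_p, C\tau_0/\sqrt{E})$ where $p(a_p) \le E/8$, and finally $M = 2a$ enlarged so that $p(r) \le E/4$ for $r \ge M$, satisfies all requirements, with the $E$-dependence entering through $b, a_p$, and the $\tau_0/\sqrt{E}$ scaling. In the compact support case $b \le R_0$, $a_p$ is absent, and $C_V$ is $E$-independent, so $\tau_0$ does not depend on $E$ and both $a, M$ are genuinely of order $E^{-1/2}$, giving $M = C_1 E^{-1/2}$ as claimed.
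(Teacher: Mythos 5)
Your region-by-region strategy, the choice of which branch of $\min(\mathcal{W},h/(4\varphi'))$ to use in each region, and the final ordering of choices ($\tau_0$, then $a$, then $M=2a$) all coincide with the paper's proof. However, there is a genuine gap in how you close the middle region $(\tfrac12,a)$. You derive the constraint $\tau_0^2 \gtrsim (a+1)^3(C_V-E/2)_+$ and then claim it is satisfied by choosing $\tau_0$ first and afterwards $a+1\gtrsim \tau_0 E^{-1/2}$. Substituting the latter into the former gives $\tau_0^2 \gtrsim \tau_0^3 E^{-3/2}(C_V-E/2)_+$, i.e.\ an \emph{upper} bound $\tau_0 \lesssim E^{3/2}/(C_V-E/2)_+$, which is incompatible with the lower bounds on $\tau_0$ coming from the inner region whenever $C_V>E/2$ (the generic case, e.g.\ small $E$ or a nontrivial potential). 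So the three constraints, as you have stated them, are not mutually compatible, and your proposed choices do not satisfy the third one.

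The fix is the observation you already use in the region $(a,M)$ but fail to apply on $(b,a)$: by the definition \eqref{defn b} of $b$ (the content of \eqref{V split b}), one has $V+\mathcal{W}V'\le V+r V'\le E/4$ for all $r>b$, so on $(b,a)$ the term $(\varphi')^2(1+2\mathcal{W}\Phi-\cdots)\ge 0$ need not dominate anything and the budget $E-E/4$ already closes the estimate. The potential only needs to be dominated on $(\tfrac12,b]$, where $(r+1)^{-3}\ge (b+1)^{-3}$, so the correct constraint is $\tau_0^2\gtrsim (b+1)^3 C_V$ --- depending on $b$ (hence on $p$, $m$, $c_0$, $E$) but \emph{not} on $a$. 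This breaks the circularity: $\tau_0$ is fixed first from the inner region and from $(b+1)^3C_V$, and then $a=\max(b,\sqrt{20}\,\tau_0E^{-1/2})$, $M=2a$, exactly as in the paper. With this correction the rest of your argument (the outer region, the polynomial bound on $(a,M)$ with $M=2a$, and the $E^{-1/2}$ scaling in the compactly supported case, where $b\le R_0$) goes through as written.
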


Once Lemma~\ref{crucial lower bd lemma} is proved, we can use a standard argument similar to that found, e.g., in~\cite[Sections 5,6]{gash20} to prove the following Carleman estimate. This argument is contained in Section~\ref{Carleman estimates}.
\begin{lemma} \label{Carleman lemma}
There are $C, \, h_0 > 0$ independent of $h$ and $\varepsilon$ so that 
\begin{equation} \label{Carleman est}
    \| \langle x \rangle^{- s} e^{\varphi/h} v \|^2_{L^2} \le \frac{C}{h^2} \|\langle x \rangle^{s}e^{\varphi/h}(P(h) - E \pm i \varepsilon)v \|^2_{L^2} + \frac{C \varepsilon}{h} \| e^{\varphi/h} v \|^2_{L^2},
\end{equation}
for all $\varepsilon > 0$ and $h \in (0, h_0]$, and for all $v \in C^\infty_0(\mathbb{R}^n)$ ($n \ge 3$) or all $v \in \mathcal{D}(P)$ with $\langle x \rangle^sPv \in L^2(\R)$ ($n =1$).
\end{lemma}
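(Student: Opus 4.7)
The plan is to integrate the distributional differential inequality for $(wF)'$ derived in Section~2, invoke Lemma~\ref{crucial lower bd lemma} to extract a coercive weighted $\|u\|^2$ term, and finally translate back to the weighted norms of $v$. Setting $u \defeq r^{(n-1)/2} e^{\varphi/h} v$ so that $P_\varphi^\pm u = r^{(n-1)/2} e^{\varphi/h}(P - E \pm i\varepsilon) v$, and inserting \eqref{crucial lower bd} into the chain giving \eqref{e:lowerDerivative}, one obtains
\begin{equation*}
(wF)' \geq -\frac{(1+\eta)w^2}{\eta h^2 w'}\|P_\varphi^\pm u\|^2 \mp 2\varepsilon w\,\imag\langle u, u'\rangle + \frac{E}{2} w'\|u\|^2
\end{equation*}
as a distribution on $(0,\infty)$.

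Next I would integrate over $r \in (0,\infty)$. The left-hand side telescopes to $[wF]_0^\infty$, which vanishes: at $r = 0$ because $w(0) = 0$ and the factor $r^{(n-1)/2}$ in $u$ keeps $F$ bounded through the singularities of $\Lambda$ and $V$; at $r = \infty$ because for $n \geq 3$ the test function $v$ (hence $u$) has compact support, and for $n = 1$ the hypothesis $\langle x\rangle^s P v \in L^2$ together with the density argument of \cite[\S6]{gash20} forces the corresponding vanishing. No jump contributions arise at the break points $r \in \{1/2, a, M\}$ because $w$ and $\varphi'$ are continuous across them by \eqref{W}--\eqref{phi prime}.

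The main obstacle is disposing of the cross term $\mp 2\varepsilon w\,\imag\langle u, u'\rangle$, whose naive Cauchy--Schwarz bound involves $\|u'\|$, for which \eqref{e:lowerDerivative} supplies no direct control outside $\{\varphi' > 0\} = \{r < M\}$. I would handle it by slightly perturbing the Young parameters $(\gamma_1, \gamma_2)$ in the derivation of \eqref{e:lowerDerivative} so as to retain a small multiple $\kappa\,w'\|hu'\|^2$ in the lower bound (shrinking $\eta$ a hair to keep \eqref{crucial lower bd} valid), and then absorbing via Young's inequality; alternatively one can exploit the pairing identity
\begin{equation*}
\imag\langle u, P_\varphi^\pm u\rangle = -h^2\bigl(\imag\langle u, u'\rangle\bigr)' + 2h\varphi'\imag\langle u, u'\rangle \mp \varepsilon\|u\|^2
\end{equation*}
multiplied by $w/h$ and integrated by parts in $r$. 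Either route produces the bound $2\varepsilon\int w\,|\imag\langle u, u'\rangle|\, dr \leq \tfrac{E}{4}\int w'\|u\|^2 dr + Ch^{-2}\int (w^2/w')\|P_\varphi^\pm u\|^2 dr + C\varepsilon h^{-1}\int \|u\|^2 dr$.

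After absorption, the inequality reads
\begin{equation*}
\tfrac{E}{4}\int_0^\infty w'\|u\|^2\,dr \leq \frac{C}{h^2}\int_0^\infty \frac{w^2}{w'}\|P_\varphi^\pm u\|^2\, dr + \frac{C\varepsilon}{h}\int_0^\infty \|u\|^2\, dr.
\end{equation*}
By construction \eqref{W}--\eqref{w} one has $w'(r) \geq c\langle r\rangle^{-2s}$ and $w(r)^2/w'(r) \leq C\langle r\rangle^{2s}$ for all $r > 0$ (for $r > M$ this is the assertion in \eqref{w}; on the compact range $r \leq M$ both bounds are immediate). Consequently the coercive term dominates $c\,\|\langle x\rangle^{-s} e^{\varphi/h} v\|_{L^2(\R^n)}^2$, the source term is dominated by $Ch^{-2}\|\langle x\rangle^s e^{\varphi/h}(P - E \pm i\varepsilon) v\|_{L^2}^2$, and the final term is $Ch^{-1}\varepsilon\|e^{\varphi/h} v\|_{L^2}^2$, yielding \eqref{Carleman est}.
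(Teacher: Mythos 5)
Your overall strategy (integrate $(wF)'$ over $(0,\infty)$, use Lemma \ref{crucial lower bd lemma}, convert weights) is the paper's, and you correctly identify the cross term $\mp 2\varepsilon w\,\imag\langle u,u'\rangle$ as the main obstacle. But neither of your two proposed fixes actually closes the gap. The Young-inequality route fails quantitatively: if you retain $\kappa w'\|hu'\|^2$ and split $2\varepsilon w|u||u'| \le \kappa w'|hu'|^2 + \tfrac{\varepsilon^2 w^2}{h^2\kappa w'}|u|^2$, the second coefficient is $\tfrac{\varepsilon}{h}\cdot\tfrac{w^2}{\kappa w'}$ (after reducing to $\varepsilon\le h$, a reduction you should also make explicit), and since $w$ is bounded below by $w(1/2)$ on $r\ge 1/2$ while $w'\sim (r+1)^{-2s}$ there, this is $\tfrac{\varepsilon}{h}(r+1)^{2s}|u|^2$, not $\tfrac{\varepsilon}{h}|u|^2$. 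Rebalancing only moves the unbounded factor onto the $|hu'|^2$ side, where it cannot be absorbed into $\kappa w'\|hu'\|^2$ for the same reason. The alternative imaginary-part pairing, multiplied by $w/h$ and integrated by parts, produces $\imag\langle u,u'\rangle$ against the weight $hw'+2w\varphi'$, which on $r>M$ (where $\varphi'=0$) is again $hw'$, much smaller than the weight $w$ you need to control; so it does not yield your displayed bound either. In short, the displayed estimate for $2\varepsilon\int w|\imag\langle u,u'\rangle|$ is essentially the right target but is not proved by the mechanisms you describe.

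The missing ingredient is a separate, unweighted elliptic estimate for the first-order term away from the origin. The paper bounds $2\varepsilon w|u||u'|\lesssim \tfrac{\varepsilon}{h}(|u|^2+|hu'|^2)$ using only the boundedness of $w$ (absorbing the $r<1/2$ piece into the retained $w'\|hu'\|^2$, where $w\le w'$), and is then left with $\tfrac{\varepsilon}{h}\int_{r\ge 1/2}|hu'|^2$ with \emph{no decaying weight}. This is controlled by pairing $\real\int (P^{\pm}_\varphi(h)\psi u)\overline{\psi u}$ for a cutoff $\psi$ supported in $\{r\ge 1/4\}$, using $\Lambda\ge 0$, the cancellation \eqref{int by parts}, and the boundedness of $V-E-(\varphi')^2$ on $\supp\psi$, which gives $\int_{r\ge 1/2}|hu'|^2 \lesssim \int |u|^2 + \gamma\int (r+1)^{-2s}|u|^2 + \gamma^{-1}\int (r+1)^{2s}|P_\varphi^\pm u|^2 + \gamma^{-1}h^2\int_{1/4\le r\le 1/2}|hu'|^2$; the $\gamma$- and $h^2$-small terms are then absorbed into the left side of the integrated Carleman inequality. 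Without this step (or an equivalent), the argument does not yield \eqref{Carleman est} with the unweighted term $\tfrac{C\varepsilon}{h}\|e^{\varphi/h}v\|_{L^2}^2$, and that unweighted form is exactly what the resolvent argument of Section \ref{s: resolv ests} requires.
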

From here, Theorem \ref{thm high dim} follows from the argument in Section~\ref{s: resolv ests}.

\begin{proof}[Proof of Lemma \ref{crucial lower bd lemma}]

\textbf{Case $0 < r < \tfrac{1}{2}$:}\\
Using~\eqref{key prelim calc}, together with $\mathcal{W}=r$, $\Phi=-\frac{\delta}{2r}$, and $\varphi'= 3^{-1}\cdot 2^{-\frac{\delta}{2} +1}\tau_0r^{-\frac{\delta}{2}}$, we have
\begin{equation*}
\begin{split}
   A(r)-(1+\eta)B(r) &\ge w'\Big[ E+(\varphi')^2(1+2\mathcal{W}\Phi- (1+\eta)\mathcal{W}\Phi^2\min(\mathcal{W},\tfrac{h}{4\varphi'})) -V-\mathcal{W}V'  \Big]\\
   &\ge w' \left[ E + 9^{-1} \cdot 2^{-\delta +2} \tau^2_0 r^{-\delta} \left( 1 - \delta  - \tfrac{1}{4}(1 + \eta) \delta^2\right) -2c_1 r^{-\delta}  \right]. 
    \end{split}
\end{equation*}
Since $\eta>0$ is such that   $1- \delta - 4^{-1}(1 + \eta)\delta^2 >0$, choosing
\begin{equation}
\tau^2_0 \geq \frac{9 \cdot 2^{\delta -1}c_1}{(1 - \delta - 4^{-1}(1 + \eta)\delta^2)},
\end{equation}
yields
\begin{equation*}
A - (1 + \eta)B \ge \frac{E}{2} w', \qquad h \in (0,1], \, 0 < r < \frac{1}{2}.
\end{equation*}

The precise value, $(1+\eta)$ will not play a crucial role below, therefore, to ease notation, we put $K:=1+\eta$ below.

\textbf{Case $\tfrac{1}{2} < r < a$:}\\
First, recall~\eqref{key prelim calc}:
\begin{equation*}
\begin{aligned}
A(r)- KB(r)&\geq  w'\Big[ E+(\varphi')^2(1+2\mathcal{W}\Phi-K\mathcal{W}\Phi^2\min(\mathcal{W},\tfrac{h}{4\varphi'})) -V-\mathcal{W}V'  \Big].
\end{aligned}
\end{equation*}
By \eqref{W} and \eqref{Phi}, 
\begin{equation*}
 1 + 2 \mathcal{W} \Phi \ge \frac{1}{4(r+1)}, \qquad \tfrac{1}{2} < r < a.
\end{equation*}
Also by~\eqref{W}, $|\mathcal{W}|\leq r$ when $0 < r < a$, hence appealing to \eqref{V split b},
$$
V+\mathcal{W}V'\leq ( V +  r V') \mathbf{1}_{|x| \le b} + \frac{E}{4} \mathbf{1}_{|x|> b}.
$$
Furthermore, using $|\mathcal{W}|\leq r$ again, by~\eqref{Phi} $\Phi^2=(r+1)^{-2}$, and by~\eqref{phi prime} $\varphi'=\tau_0(r+1)^{-1}$,
\begin{equation*}
(\varphi')^2\mathcal{W}\Phi^2 \min(\mathcal{W},\tfrac{h}{4\varphi'})\leq \frac{hr\tau_0}{4(r+1)^3}, \qquad 0 < r < a.
\end{equation*}

From these estimates, and using once more that $\varphi'=\tau_0(r+1)^{-1}$, we find,
\begin{equation} \label{A - KB prelim Lip r small}
\begin{aligned}
A(r)&- KB(r)\\
&\ge w' \Big[ E+(\varphi')^2 \big( 1+2\mathcal{W}\Phi \big)
- \frac{ K h r\tau_0}{4(r+1 )^{3}} 
- ( V + r V') \mathbf{1}_{r \le b} - \frac{E}{4} \mathbf{1}_{r > b} \Big] \\
&\ge w' \Big[ \frac{3E}{4} + \Big( \frac{\tau^2_0}{4(r+1)^{3}} - 2c_1 r^{-\delta} \mathbf{1}_{0 < r  < 1} - (V + rV')\mathbf{1}_{1 \le r \le b}  \Big)   -     \frac{Khr\tau_0}{4(r+1)^3} \Big], \qquad \tfrac{1}{2} < r < a.
\end{aligned}
\end{equation}

We now further increase $\tau_0$, if necessary, so that 
$$\tau_0 \ge 2\sup_{\frac{1}{2}\leq r\leq b}\, (r+1)^{3/2}\sqrt{ 2c_1 r^{-\delta}  \mathbf{1}_{0 < r  < 1} + (p(r)+c_0m(r))\mathbf{1}_{1 \le r \le b}},
$$
 which makes the term in parenthesis in the third line of \eqref{A - KB prelim Lip r small} is nonnegative.
We then take $h_0 = h_0(K, \tau_0, E) \in (0, 1]$ sufficiently small to achieve
\begin{equation} \label{key est small r Lip}
A- K B \ge  \frac{E}{2}w', \qquad  h \in (0,h_0], \, \frac{1}{2} < r < a.
\end{equation}
\noindent \textbf{Case $a < r  < M$:}\\
As in the previous case, we begin from \eqref{key prelim calc}. We use~\eqref{W}, \eqref{Phi} and \eqref{phi prime} to see
$$
(\varphi')^2(1+2\mathcal{W}\Phi)=(\varphi'(a))^2\Big(\frac{M-r}{M-a}\Big)^4\Big(1-\frac{2r}{M-r}\Big).
$$
Next, we use $a\geq b$, $|\mathcal{W}|\leq r/2$ and~\eqref{V split b} to obtain 
$$
V+\mathcal{W}V'\leq \frac{E}{4} \mathbf{1}_{> b}.
$$
Then, again by~\eqref{W},~\eqref{Phi}, and~\eqref{phi prime},
\begin{equation*}
(\varphi')^2\mathcal{W}\Phi^2\min(\mathcal{W},\tfrac{h}{4\varphi'})\leq \frac{hr\varphi'(a)}{2(M-a)^2}.
\end{equation*}
Combining these bounds with~\eqref{key prelim calc} and the formula~\eqref{phi prime} for $\varphi'(a)$, we have
\begin{equation} \label{A - KB prelim Lip r big}
\begin{split}
A(r)- KB(r)& \ge w' \Big[  \frac{3 E}{4} +(\varphi'(a))^2 \Big( \frac{M - r}{M - a} \Big)^{4} \Big(  1 -  \frac{2r}{M -r } \Big)  - 2^{-1} K    h \varphi'(a)   r \frac{1}{(M -a)^{2}} \Big] \\
&\ge w' \Big[  \frac{3 E}{4} - \frac{2\tau_0^2}{(a+1)^2} r  \frac{(M-r)^{3}}{(M- a)^{4}}   - 2^{-1} K \frac{\tau_0 }{a+1}  h r \frac{1}{(M -a)^{2}}\Big].
\end{split}
\end{equation}
Now, choose $M = 2a$ and estimate, for $a < r < M$,
\begin{equation*} 
2r \frac{(M-r)^{3}}{(M- a)^{4}} \le 4,\qquad  r \frac{1}{(M -a)^2} \le \frac{2}{a}. 
\end{equation*}

Therefore, we choose 
$$
a=\max(\sqrt{20}\tau_0E^{-1/2},b)
$$
and $h_0= K^{-1}$, ensuring that the bracketed terms in the second line of \eqref{A - KB prelim Lip r big}  are bounded from below by $E/2$. This yields
\begin{equation} \label{key est large r Lip}
A- K B \ge  \frac{E}{2}w', \qquad  h \in (0,h_0], \, a < r < M.
\end{equation}
 \noindent \textbf{Case $r  > M$:}\\
 In this final case we have $\varphi' = 0$, so appealing to \eqref{A and B}, we have
 $$
 A-KB=w' \big[ E- V - \mathcal{W} V' ].
 $$
 By \eqref{V split b}, $V\leq \frac{E}{4}$. By \eqref{tilde m} and~\eqref{W}, $\mathcal{W}V'\leq c_0 m \tilde{m}/2 \le E/4$. Hence, 
 \begin{equation*}
 A- KB = w' \big[ E- V - \mathcal{W} V' ] \ge w' \Big[\frac{3E}{4} - \frac{c_0 m \tilde{m}}{2} \Big] \ge \frac{E}{2} w',  \qquad  h \in (0,1], \, r > M.
 \end{equation*}
 This completes the proof of the Lemma. \\
\end{proof}

\section{Carleman estimate} \label{Carleman estimates}
Our goal in this section is to prove Lemma \ref{Carleman lemma}. This argument is standard; versions of it appear, for instance, in \cite[Section 5]{gash20} in \cite[proof of Lemma 2.2]{da14}, but we include it here for the reader's convenience.  

\noindent {\bf{Remark:}} In the proof of Lemma \ref{Carleman lemma}, we abuse notation slightly. In dimension $n\geq 3$, we put $\|u\|=\|u(r, \cdot)\|_{L^2(\US_\theta^{n-1})}$, while we put $\|u\|=|u(r)| + |u(-r)|$ when $n=1$. If $n \ge 3$, $\int_{r,\theta}$ denotes the integral over $(0,\infty) \times \US^{n-1}$ with respect to the measure $dr d\theta$, while if $n =1$, $\int_{r,\theta}u(x)$ denotes $\int_0^\infty u(r) dr + \int^\infty_0 u(-r) dr = \int_\R u(x) dx$. 
\smallskip
\begin{proof}[Proof of Lemma \ref{Carleman lemma}]
Since $\langle x \rangle^{-2s} \le 1$, without loss of generality, we may assume $0 < \varepsilon \le h$.

The proof begins from~\eqref{e:lowerDerivative}. Then, applying~\eqref{crucial lower bd}, it follows that, for $h \in (0,h_0]$,
\begin{equation} \label{lower bound wF prime}
   w' F + w F' \ge -\frac{3 w^2}{h^2w'} \| P^{\pm}_\varphi(h)u \|^2 \mp  2\varepsilon w \imag \langle u,u'\rangle + \frac{1}{3}w'\|hu' \|^2 +\frac{E}{2} w' \|u\|^2.
\end{equation}

Now we integrate both sides of \eqref{lower bound wF prime}. We integrate $\int^\infty_0dr$ and use\\ $wF, \, (wF)' \in L^1((0,\infty);dr)$ (when $n =1$, this follows from the facts that $w$ is bounded and $w/w' \le (r +1)^{2s}$). Since, $wF(0) =0$, $\int_{0}^\infty (wF)'dr = 0$. Therefore, 
\begin{equation}
    \int_{r,\theta} w' \left(|u|^2 +|hu'|^2 \right) \lesssim \frac{1}{h^2} \int_{r,\theta} (r + 1)^{2s}|P^{\pm}_\varphi(h)u|^2  +  2\varepsilon \int_{r, \theta} w|u u'|, \qquad h \in (0,h_0].
    \end{equation}
In addition to $w$ bounded, $w/w' \le (r +1)^{2s}$, \eqref{w}, also gives $w'\geq w$ on $r<\frac{1}{2}$ and $w' \ge (r + 1)^{-2s}$. Therefore,
\begin{equation} \label{penult est}
    \int_{r,\theta} (r + 1)^{-2s}\left(|u|^2 +|hu'|^2 \right) \lesssim \frac{1}{h^2} \int_{r,\theta} (r + 1)^{2s}|P^{\pm}_\varphi(h)u|^2  +  \frac{\varepsilon}{h} \int_{r, \theta} |u|^2 + \frac{\varepsilon}{h} \int_{\substack{r,\theta\\r\geq \frac{1}{2}}}|hu'|^2, \qquad h \in (0,h_0],
\end{equation}
where we have used $0 < \varepsilon \le h$.

Moreover, letting $\chi \in C_0^\infty[0,\frac{1}{2})$ with $\chi \equiv 1$ on $[0,\frac{1}{4}]$ and $\psi(r)=1-\chi(r)$,
\begin{equation} \label{rewrite Pu bar u}
    \begin{split}
        \real \int_{r,\theta}(P^{\pm}_\varphi(h) \psi u)\overline{\psi u} &= \int_{r,\theta} |h(\psi u)'|^2  + \real \int_{r,\theta} 2 h\varphi' (\psi u)' \overline{\psi u} + \int_{r, \theta} (h^2 \Lambda \psi u)\overline{\psi u} \\
        &+ \int_{r,\theta} h\varphi''|\psi u|^2 + \int_{r,\theta} \left(V - E - (\varphi')^2 \right)|\psi u|^2,\\
    \end{split}
\end{equation}
and 
\begin{equation} \label{int by parts}
    \int_{r, \theta} h \varphi''|\psi u|^2 = - \real \int_{r,\theta} 2 \varphi'h (\psi u)' \overline{\psi u}. 
\end{equation}
These two identities, together with the facts that  $\Lambda \ge 0$ and $|V - E - (\varphi')^2|$ is bounded on $\supp \psi$ (independent of $h$ and $\varepsilon$) imply, that for all $h\in (0,1]$, $\gamma>0$,
\begin{equation} \label{handle deriv u term}
\begin{split}
    \int_{\substack{r, \theta\\ r\geq \frac{1}{2}}} |hu'|^2& \lesssim  \int_{r,\theta} |u|^2 + \frac{\gamma}{2} \int_{r, \theta}(r +1)^{-2s} |u|^2 + \frac{1}{2\gamma} \int_{r, \theta}(r +1)^{2s} |P^{\pm}_\varphi(h)(\psi u)|^2\\
    &\leq \int_{r,\theta} |u|^2 + \frac{\gamma}{2} \int_{r, \theta}(r +1)^{-2s} |u|^2 + \frac{1}{\gamma} \int_{r, \theta}(r +1)^{2s} |\psi P^{\pm}_\varphi(h)( u)|^2+ \frac{
    Ch^2}{\gamma} \int_{\substack {r, \theta\\\frac{1}{4}\leq r\leq \frac{1}{2}}} |hu'|^2. 
    \end{split}
\end{equation}
To finish, we substitute \eqref{handle deriv u term} into the right side of \eqref{penult est}. Recalling $0 < \varepsilon \le h$, choose $\gamma > 0$ small enough (independent of $h$ and $\varepsilon$), and then $h$ small enough, to absorb the second and fourth terms in the last line of~\eqref{handle deriv u term} into the right side of \eqref{penult est}. We obtain
\begin{equation} \label{final est}
\begin{split}
    \int_{r,\theta} &(r + 1)^{-2s}(|u|^2 + |hu'|^2) \lesssim \\
    & \frac{1}{h^2} \int_{r,\theta} (r + 1)^{2s}|P^{\pm}_\varphi(h)u|^2  +  \frac{\varepsilon}{h} \int_{r, \theta} |u|^2, \qquad h \in (0, h_0].
    \end{split}
\end{equation}
The estimate
  \eqref{Carleman est} is now an easy consequence of \eqref{final est}.\\
\end{proof}

\section{Resolvent estimates} \label{s: resolv ests}
In this section, we deduce Theorem \ref{thm high dim} from Lemma \ref{Carleman lemma}. The same argument appears, e.g., in \cite[Section 6]{gash20} and \cite[last proof of Section 2]{da14}, but we include it here for completeness.

\begin{proof}[Proof of the Theorem]
Since increasing $s$ in only decreases the weighted resolvent norms \eqref{defn g} and \eqref{defn g a}, without loss of generality we may take $1/2 < s < 1$. Let $C, h_0 > 0$ be as in the statement of Lemma \ref{Carleman lemma}. Put $C_\varphi = C_\varphi(h) \defeq 2 \max \varphi$. By \eqref{Carleman est}, and since $2\varphi(r) = C_\varphi$ for $r \ge M$,
\begin{equation} \label{mult through by exp}
\begin{split}
e^{-C_\varphi/h}\|\langle x \rangle^{-s} \mathbf{1}_{\le M} v \|^2_{L^2} + \|\langle x \rangle^{-s} \mathbf{1}_{\ge M} v \|^2_{L^2}  &\le 
 \frac{C}{h^2} \|\langle x \rangle^{s} (P(h) - E \pm i\varepsilon)v \|^2_{L^2}  +  \frac{C\varepsilon}{h}  \| v \|^2_{L^2},
\end{split}
\end{equation}
for all $\varepsilon \ge 0$ and $h \in (0, h_0]$, and all $v \in C^\infty_0(\mathbb{R}^n)$ ($n \ge 3$) or all $v \in \mathcal{D}(P)$ with $\langle x \rangle^sPv \in L^2(\R)$ ($n =1$). Moreover, for any $\gamma, \, \gamma_0 > 0$,
\begin{equation} \label{epsilon v}
\begin{split}
2\varepsilon \| v \|^2_{L^2} &= -2 \imag\langle (P(h) - E \pm i\varepsilon)v, v \rangle_{L^2} 
\\& \le  \gamma^{-1}\|\langle x \rangle^{s} \mathbf{1}_{\le M} (P(h) - E \pm i \varepsilon)v \|^2_{L^2} 
+ \gamma\|\langle x \rangle^{-s}  \mathbf{1}_{\le M}  v\|^2_{L^2} \\
   &+\gamma_0^{-1}\|\langle x \rangle^{s} \mathbf{1}_{\ge M} (P(h) - E \pm i \varepsilon)v \|^2_{L^2} 
+ \gamma_0\|\langle x \rangle^{-s}  \mathbf{1}_{\ge M}  v\|^2_{L^2}
\end{split}
\end{equation}
 Setting $\gamma = he^{-C_\varphi/ h}/C $ and $\gamma_0 = h/C$, \eqref{mult through by exp} and \eqref{epsilon v} imply, for some $\tilde{C} > 0$, all $\varepsilon \ge 0, \, h \in (0,h_0]$, and all $v \in C^\infty_0(\mathbb{R}^n)$ ($n \ge 3$) or all $v \in \mathcal{D}(P)$ with $\langle x \rangle^sPv \in L^2(\R)$ ($n =1$),
\begin{equation} \label{penult}
\begin{split}
e^{-\tilde{C}/h} \|\langle x \rangle^{-s}  \mathbf{1}_{\le M} v \|_{L^2}^2 + \|\langle x \rangle^{-s} \mathbf{1}_{\ge M} v \|_{L^2}^2 
& \le e^{\tilde{C}/h} \|\langle x \rangle^s \mathbf{1}_{\le M} (P(h) - E \pm i \varepsilon)v \|_{L^2}^2 \\
 &+ \frac{\tilde{C}}{h^2} \|\langle x \rangle^s \mathbf{1}_{\ge M} (P(h) - E \pm i \varepsilon)v \|_{L^2}^2.
\end{split}
\end{equation}

The final task is to use \eqref{penult} to deduce 
\begin{equation} \label{ult}
\begin{gathered}
e^{-\tilde{C}/{h}} \|\langle x  \rangle^{-s}  \mathbf{1}_{\le M} (P(h)-E \pm i\varepsilon)^{-1} \langle x \rangle^{-s} f \|^2_{L^2} +  \|\langle x  \rangle^{-s}  \mathbf{1}_{\ge M} (P(h)-E \pm i\varepsilon)^{-1} \langle x \rangle^{-s} f \|^2_{L^2} \\
\leq e^{\tilde{C}/h} \| \mathbf{1}_{\le M} f \|_{L^2}^2 + \frac{\tilde{C}}{h^2}  \| \mathbf{1}_{\ge M} f \|_{L^2}^2 , \qquad  \varepsilon > 0, \, h \in (0,h_0], \, f \in L^2,
\end{gathered}
\end{equation}
from which Theorem \ref{thm high dim} follows. If $n =1$, \eqref{penult} immediately implies \eqref{ult} by setting $v = (P(h) - E \pm i \varepsilon)^{-1} \langle  x \rangle^{-s} f$. To establish \eqref{ult} when $n \ge 3$, we prove a simple Sobolev space estimate and then apply a density argument that relies on \eqref{penult}. 

The operator
\begin{equation*}
[P(h), \langle x \rangle^s]\langle x \rangle^{-s} = \left(-h^2 \Delta \langle x \rangle^s - 2h^2 (\nabla \langle x \rangle^s) \cdot \nabla \right) \langle x \rangle^{-s}
\end{equation*}
is bounded $H^2 \to L^2$. So, for $v \in H^2$ such that $\langle x \rangle^s v \in H^2$,
 \begin{equation}\label{Ceph}
 \begin{split}
\|\langle x \rangle^{s}(P(h)-E \pm i \varepsilon)v\|_{L^2}  &\le \|(P(h)-E \pm i \varepsilon)\langle x \rangle^{s} v \|_{L^2} +  \|[P(h),\langle x \rangle^{s}]\langle x \rangle^{-s}\langle x \rangle^{s}v \|_{L^2}
\\& \le C_{\varepsilon, h} \| \langle x \rangle^{s}v \|_{H^2},
\end{split} 
\end{equation}
for some constant $C_{\varepsilon, h}>0$ depending on $\varepsilon$ and $h$.

Given $f \in L^2$, the function $ u= \langle x \rangle^{s}(P(h)-E\pm i\varepsilon)^{-1}\langle x \rangle^{-s} f \in H^2$ because 
\begin{equation*}
u = (P(h) - E \pm i \varepsilon)^{-1} (f - w), \qquad w = \langle x \rangle^{s} [P(h), \langle x \rangle^{-s}]   \langle x \rangle^{s}  \langle x \rangle^{-s}u,
\end{equation*}
with  $\langle x \rangle^{s} [P(h), \langle x \rangle^{-s}]   \langle x \rangle^{s}$ being bounded $L^2 \to L^2$ since $s < 1$.

Now, choose a sequence $v_k \in C_{0}^\infty$ such that $ v_k \to  \langle x \rangle^{s}(P(h)-E \pm i\varepsilon)^{-1}\langle x \rangle^{-s} f$ in $H^2$. Define \\ $\tilde{v}_k \defeq \langle x \rangle^{-s}v_k$. Then, as $k \to \infty$,
\begin{equation*}
\begin{split}
\| \langle x \rangle^{-s} \tilde{v}_k - \langle x \rangle^{-s} (&P(h)-E\pm i \varepsilon)^{-1}\langle x \rangle^{-s}f \|_{L^2}  \\
&\le \| v_k - \langle x \rangle^{s} (P(h)-E\pm i \varepsilon)^{-1}\langle x \rangle^{-s}f \|_{H^2} \to 0.
\end{split}
\end{equation*}
Also, applying \eqref{Ceph},
\begin{equation*}
\|\langle x \rangle^{s}(P(h)-E\pm i \varepsilon)\tilde v_k - f\|_{L^2} \le C_{\varepsilon,h} \|v_k - \langle x \rangle^{s} (P(h)-E \pm i \varepsilon)^{-1} \langle x \rangle^{-s} f \|_{H^2} \to 0.
\end{equation*} 
We then achieve \eqref{ult} by replacing $v$ by $\tilde{v}_k$ in \eqref{penult} and sending $k \to \infty$.\\
\end{proof}

\appendix
\section{Self-adjointness of $P$ in one dimension}

\begin{lemma}[generalization of the Lemma in \cite{dash20}]
Suppose $V : \R \to \R$ may be written as $V = V_1 + V_\infty$ for $V_1 \in L^1(\R)$ and $V_\infty \in L^\infty(\R)$. Let $\mathcal D$ be the set of all $u \in H^1(\R)$ such that $u' \in L^\infty(\R)$ and $Pu \in L^2(\R)$. Then $P$, with domain $\mathcal{D}$, is densely defined and self-adjoint on $L^2(\R)$.
\end{lemma}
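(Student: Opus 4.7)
My plan is to build $P$ via the KLMN form representation theorem and then identify the resulting operator domain with $\mathcal{D}$. Consider the quadratic form
\begin{equation*}
q(u,v) := h^2 \int_{\R} u' \overline{v'}\, dx + \int_{\R} V u \bar v\, dx, \qquad u, v \in H^1(\R).
\end{equation*}
The one-dimensional Sobolev inequality $\|u\|_{L^\infty}^2 \le \epsilon \|u'\|_{L^2}^2 + C_\epsilon \|u\|_{L^2}^2$ gives
\begin{equation*}
\Big| \int_\R V_1 |u|^2\, dx \Big| \le \|V_1\|_{L^1} \|u\|_{L^\infty}^2 \le \epsilon \|V_1\|_{L^1} \|u'\|_{L^2}^2 + C_\epsilon \|V_1\|_{L^1} \|u\|_{L^2}^2
\end{equation*}
for any $\epsilon >0$, so the $V_1$-part of $q$ is infinitesimally form-bounded with respect to the kinetic form $h^2 \|u'\|_{L^2}^2$, while the $V_\infty$-part is a bounded perturbation. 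Hence $q$ is symmetric (as $V$ is real), closed, and bounded below on $H^1(\R)$, and KLMN produces a unique densely defined self-adjoint operator $P_F$ on $L^2(\R)$ with form domain $H^1(\R)$.

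Next I would show $\mathcal{D}(P_F) = \mathcal{D}$. Pairing $q(u,\cdot)$ against $v \in C_c^\infty(\R)$ and using that $v$ has compact support (so no boundary terms appear) shows $P_F u = -h^2 u'' + Vu = Pu$ as distributions, whence $\mathcal{D}(P_F) \subseteq \{u \in H^1 : Pu \in L^2\}$. Conversely, for $u \in H^1$ with $Pu \in L^2$, one has $u'' \in L^1_{\mathrm{loc}}$, so integration by parts against $v \in C_c^\infty$ yields $q(u,v) = \langle Pu, v\rangle_{L^2}$; density of $C_c^\infty$ in $H^1$ together with continuity of both sides then extends this to all $v \in H^1$, and since $Pu \in L^2$ the functional $v \mapsto q(u,v)$ further extends to a bounded functional on $L^2$, so $u \in \mathcal{D}(P_F)$. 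Thus $\mathcal{D}(P_F) = \{u \in H^1 : Pu \in L^2\}$, and density of $\mathcal{D}$ is automatic from density of $\mathcal{D}(P_F)$.

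The main obstacle—which is also the reason the extra condition $u' \in L^\infty$ in the definition of $\mathcal{D}$ does not shrink the domain—is to prove the automatic regularity $u \in H^1(\R),\ Pu \in L^2 \Rightarrow u' \in L^\infty(\R)$; this is nontrivial because $u''$ is only in $L^1 + L^2$ and $L^2$ derivatives need not be pointwise bounded. I would handle this by bootstrapping via the Green's function $G(x) := (2h)^{-1} e^{-|x|/h}$ of $-h^2 \partial_x^2 + 1$. Rewriting $-h^2 u'' = Pu - Vu$ as $(-h^2\partial_x^2 + 1) u = F + V_1 u$ with $F := (V_\infty + 1)u - Pu \in L^2$, injectivity of $-h^2 \partial_x^2 + 1$ on the tempered distributions forces $u = G * F + G * (V_1 u)$. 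Since $G * F \in H^2(\R) \hookrightarrow C^1_b(\R)$, its derivative lies in $L^\infty$; since $G' \in L^\infty(\R)$ and $V_1 u \in L^1$ (using $u \in H^1 \subset L^\infty$ in 1D), Young's inequality gives $(G * (V_1 u))' = G' * (V_1 u) \in L^\infty$. Combined, $u' \in L^\infty$, which completes the identification $\mathcal{D}(P_F) = \mathcal{D}$ and yields the self-adjointness of $P$.
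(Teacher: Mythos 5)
Your argument is correct, but it takes a genuinely different route from the paper's. The paper stays inside Sturm--Liouville theory: it introduces the maximal domain $\mathcal D_{\textrm{max}}=\{u\in L^2:\ u'\in L^1_{\textrm{loc}},\ Pu\in L^2\}$, quotes \cite[Lemma 10.3.1]{zet} for density and for $P_{\textrm{max}}^*=P_{\textrm{min}}$, and proves $\mathcal D_{\textrm{max}}=\mathcal D$ by an elementary a priori estimate: integration by parts on $[-a,a]$ yields a closed system of inequalities in $\|u'\|_{L^2(-a,a)}$, $\sup_{[-a,a]}|u|$ and $\sup_{[-a,a]}|u'|$ whose solutions are bounded uniformly in $a$, whence $u'\in L^2$ and $u,u'\in L^\infty$; self-adjointness then follows from $P\subset P^*$ (symmetry) and $P^*\subset P$ (Sturm--Liouville). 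You instead realize the operator as the KLMN form sum on $H^1$ --- using the same one-dimensional Sobolev inequality that powers the paper's estimates to get infinitesimal form-boundedness of the $V_1$ part --- identify the operator domain with $\{u\in H^1:Pu\in L^2\}$ via the first representation theorem, and recover $u'\in L^\infty$ from the Green's function $G=(2h)^{-1}e^{-|x|/h}$ together with Young's inequality ($G'\in L^\infty$, $V_1u\in L^1$). Both identifications are complete; your route additionally yields semiboundedness of $P$ and avoids Sturm--Liouville theory altogether, while the paper's is more elementary in its toolkit and proves slightly more, namely that $u\in H^1$ is itself automatic for any $u\in L^2$ with locally integrable derivative and $Pu\in L^2$ (you assume $u\in H^1$ from the outset, which suffices for the lemma as stated since $\mathcal D\subseteq H^1$ by definition). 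The only blemish is a harmless sign slip in the bootstrap: with your normalization $(-h^2\partial_x^2+1)u=F-V_1u$ with $F=Pu+(1-V_\infty)u$; since $F\in L^2$ and $V_1u\in L^1$ either way, the argument is unaffected.
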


\begin{proof}[Proof of Lemma]

Let $\mathcal D_\textrm{max}$ be the set of all $u \in L^2$ such that $u' \in L^1_\textrm{loc}$ and $Pu \in L^2$. By \cite[Lemma 10.3.1]{zet}, $\mathcal{D}_{\text{max}}$ is dense in $L^2(\R)$. We  begin by proving that  $\mathcal D_\textrm{max} = \mathcal D$. Indeed, for any $a>0$ and $u \in \mathcal D_\textrm{max}$, by integration by parts and Cauchy--Schwarz, we have
\begin{equation*}
\begin{gathered}
\int_{-a}^a|u'|^2 = u'\bar u|_{-a}^a - \int_{-a}^a u''\bar u \le \\
 2\sup_{[-a,a]}|u'|\sup_{[-a,a]} |u| + h^{-2}\|V_1\|_{L^1} \sup_{[-a,a]}|u|^2 + h^{-2}(\| V_\infty\|_{L^\infty} + \|Pu\|_{L^2})\|u\|_{L^2},\\
\sup_{[-a,a]}|u|^2 =\sup_{x \in [-a,a]}  \left(|u(0)|^2 + 2 \real \int_0^x u'\bar u\right)  \le |u(0)|^2 + 2\left(\int_{-a}^a|u'|^2\right)^{1/2}\|u\|_{L^2},\\
\sup_{[-a,a]} |u'|^2 \le |u'(0)|^2 + 2 h^{-2}\|V_1\|_{L^1}\sup_{[-a,a]}|u|  \sup_{[-a,a]}|u'| +  2h^{-2}(\| V_\infty\|_{L^\infty} + \|Pu\|_{L^2})\left(\int_{-a}^a|u'|^2\right)^{1/2}. 
\end{gathered}
\end{equation*}
This is a system of inequalities of the form $x^2 \le 2yz+Ay^2 + B$, $y^2 \le C + Dx$, $z^2 \le E + Fyz + Gx$. After using the second to eliminate $y$, we obtain a system in $x$ and $z$ with quadratic left hand sides and subquadratic right hand sides. Hence $x$, $y$, and $z$ are each bounded in terms of $A, B, \dots, G$. Letting $a \to \infty$, we conclude that $u' \in L^2$, $u \in L^\infty$, and $u' \in L^\infty$. Hence  $\mathcal D_\textrm{max} = \mathcal D$.

Equip $P$  with the domain $\mathcal D_\textrm{max} = \mathcal D \subset L^2$. By integration by parts, $P \subset P^*$. But, by Sturm--Liouville theory,  $P^* \subset P$: see \cite[Lemma 10.3.1]{zet}. Hence $P=P^*$.\\
\end{proof}

\end{document}